\documentclass[12pt,a4paper]{article}
\usepackage{amsfonts}
\usepackage{mathrsfs}
\usepackage{amsmath,amssymb,amsthm}
\usepackage{tikz}
\usepackage{setspace}
\usetikzlibrary{positioning, shapes.geometric}
\usepackage{geometry}
\newtheorem{theorem}{Theorem}[section] 
  
\newtheorem{lemma}[theorem]{Lemma} 
  
\newtheorem{claim}[theorem]{Claim}

\numberwithin{equation}{section}

\usepackage{titlesec}
\titleformat{\section}[block]
{\normalfont\fontsize{16}{19.2}\selectfont\bfseries}
{\thesection}
{1em}
{}

\title{\textbf{On the product of cross-intersecting families with small covering number}}
\author{
	Peter Frankl\textsuperscript{1} , Long Lin\textsuperscript{2} 
	, Hehui Wu\textsuperscript{2}\\ \\
	\textsuperscript{1}R\'enyi Institute, Budapest, Hungary \\
	\textsuperscript{2}Shanghai Center for Mathematical Sciences\\ Fudan University, Shanghai, China\\ \\
}
\date{}

\begin{document}
	
	\maketitle
		\begin{abstract}
	A central problem in extremal set theory is to determine or estimate $m(n,k,t),n>2k\geq 2t$, the maximum size of an intersecting $k$-graph and covering number at least $t$(see the paper for the definitions). For $t=1$ and $2$ the classical Erd\H{o}s-Ko-Rado Theorem and the Hilton-Milner Theorem provide the answer.The complete solution for $t=3$ was only achieved recently . There are some partial results for $t=4,5$ but for the general case even to determine the asymptotic  appears to be hopelessly difficult .
	 
	Denoting by $\widetilde{m}(n,k,t)$ the maximum of $|\mathcal{F}||\mathcal{G}|$ for a pair of cross-intersecting $k$-graphs with covering number at least $t$, $\widetilde{m}(n,k,t)\geq {m}(n,k,t)^2$ is obvious. Pyber showed that equality holds for $t=1$. The same was shown for $t=2$ in a wide range(cf.[7]).
	Quite surprisingly our results show that the inequality is strict for $t\geq 3$ and for $n>n_0(k,t)$, Theorem 1.7 determines the exact value of $\widetilde{m}(n,k,t)$ for $k>2t$ and  $n$ sufficiently large.

	\end{abstract}
	
	\section{Introduction}
	
	Let $[n]=\{1,2,\ldots,n\}$ be the standard $n$-set, $2^{[n]}$ its power set. Subsets of $2^{[n]}$ are called families. For $0\leq k\leq n$, $\binom{[n]}{k}$ is the collection of all $k$-subsets. Subsets of $\binom{[n]}{k}$ are called $k$-graphs.
	
	For a family $\mathcal{F}$, set $\bigcap \mathcal{F}=\displaystyle \bigcap_{F\in \mathcal{F}}F$. If $\bigcap \mathcal{F}\neq \emptyset$ then $\mathcal{F}$ is called a star.
	
		A family $\mathcal{F}$ is called \emph{intersecting} if $F \cap F' \neq \varnothing$ holds for all $F, F' \in \mathcal{F}$. Similarly, two families $\mathcal{F}$ and $\mathcal{G}$ are called \emph{cross-intersecting} (CI for short) if $F \cap G \neq \varnothing$ holds for all $F \in \mathcal{F}$, $G \in \mathcal{G}$.
		
		A set $T$ is called a transversal of the family $\mathcal{F}$ if $\{T\}$ and $\mathcal{F}$ are cross-intersecting. Define the covering number 
		$$\tau(\mathcal{F})=\min\{|T|: T \text{ is called a transversal of }\mathcal{F} \}.$$
		
		Note that if  $\mathcal{F}$ is a star then $\tau(\mathcal{F})=1$; if  $\mathcal{F}$ is an intersecting $k$-graph then $\tau(\mathcal{F})\leq k$. More generally, if a $k$-graph $\mathcal{F}$ and a $\ell$-graph $\mathcal{G}$ are CI then $\tau(\mathcal{F})\leq \ell$ and $\tau(\mathcal{G})\leq k$.
		
		Let us recall two fundamental theorems of extremal set theory.
		
		\noindent \textbf {Theorem 1.1} (Erd\H{o}s-Ko-Rado [1]) Let $n \geq 2k > 0$ and suppose that $\mathcal{F} \subset \binom{[n]}{k}$ is intersecting. Then
			\begin{equation}
				|\mathcal{F}| \leq \binom{n-1}{k-1}.
			\end{equation}

	\noindent \textbf {Theorem 1.2}
	(Erd\H{o}s-Lov\'asz [2]) Suppose that $\mathcal{F}$ is an intersecting $k$-graph satisfying $\tau(\mathcal{F})=k$. Then
	\begin{equation}
		|\mathcal{F}| \leq k^k.
	\end{equation}

	As to (1.1), the full star $\{F \in \binom{[n]}{k} : 1 \in F\}$ shows that the bound is best possible.
	
	The inequality (1.2) justifies the following definition:
	\[
	m(k) := \max \left\{ |\mathcal{F}| : \mathcal{F} \text{ is an intersecting } k\text{-graph with } \tau(\mathcal{F}) = k \right\} .
	\]
	
	It is known that $m(2)=3, m(3)=10$ (cf. [2]) but the determination of $m(k)$ for $k \geq 4$ appears to be very difficult. Already for $k=4$ there is a large gap between the lower and upper bounds(cf. [5] and [6]):
	\begin{equation}
		42 \leq m(4) \leq 64 \tag{1.3}.
	\end{equation}

	For fixed $1 \leq s \leq k$ let us define
	\[
	m(n,k,s) = \max \left\{ |\mathcal{F}| : \mathcal{F} \subset \binom{[n]}{k} ; \mathcal{F} \text{ is intersecting and } \tau(\mathcal{F}) \geq s \right\}.
	\]
	By the Erd\H{o}s-Ko-Rado Theorem,
	\[
	m(n,k,1) = \binom{n-1}{k-1}.
	\]
	Hilton and Milner [9] determined $m(n,k,2)$ for $n > 2k$:
	\begin{equation}
		m(n,k,2) = \binom{n-1}{k-1} - \binom{n-k-1}{k-1} + 1 = k\binom{n-2}{k-2} + O\left(\binom{n-3}{k-3}\right). \tag{1.4}
	\end{equation}

The first author determined $m(n,k,3)$ for $n > n_0(k)$. In [8] by a different proof this was extended to the full range $n > 2k$:
	$$ \quad m(n,k,3) = \binom{n-1}{k-1} - \binom{n-k}{k-1} - \binom{n-k-1}{k-1} + \binom{n-2k}{k-1} + \binom{n-k-2}{k-3} + 3  =$$
	
	\begin{equation}
		(k^2-k+1)\binom{n-3}{k-3}+O\left(\binom{n-4}{k-4}\right) . \tag{1.5}
	\end{equation}

As to the general case, it seems to be hopeless. Let us mention an old conjecture concerning the asymptotics.

\noindent \textbf{Conjecture 1.3} ([12]) For $s \geq 3$, $k > k_0(s)$, $n \geq n_0(k,s)$:
\begin{equation}
	 \quad m(n,k,s) = \left(k^{s-1} - \binom{s-1}{2}k^{s-2} + O(k^{s-3})\right)\binom{n-s}{k-s}.\tag{1.6}
\end{equation}

The present paper is devoted to the case of two cross-intersecting families . Quite surprisingly we can get some exact bounds.

Let us consider now the case of a $k$-graph $\mathcal{F}$ and a $\ell$-graph $\mathcal{G}$ that are CI. Define
$\widetilde{m}(n,k,\ell,s,t)$ is the maximum of $|\mathcal{F}||\mathcal{G}|$ subject to $\mathcal{F}$  is a $k$-graph with $\tau(\mathcal{F}) = s $, $\mathcal{G}$ is a $\ell$-graph with  $\tau(\mathcal{G}) =t$  and they are CI .

Let us recall some known results.

\noindent \textbf{Theorem 1.4} (Pyber[3], Matsumoto-Tokushige[4])
\begin{equation}
	\widetilde{m}(n,k,\ell,1,1) = \binom{n-1}{k-1}\binom{n-1}{\ell-1} \quad \text{ for } n \geq 2k \geq 2\ell \geq 2.\tag{1.7}
\end{equation}

\noindent \textbf{Theorem 1.5} (Frankl-Wang [7] and [11] )
\begin{equation}
	\widetilde{m}(n,k,k,2,2) = \left( \binom{n-1}{k-1} - \binom{n-k-1}{k-1} + 1 \right)^2 \quad \text{for } n \geq 2k \geq 18.\tag{1.8}
\end{equation}

Let us mention that it is conjectured that (1.8) holds for all $n \geq 2k , 4\leq k\leq 8$ as well. In [13] (1.8) is established for $k=3, n \geq 6$.

The main result of the present paper is the determination of $\widetilde{m}(n,k,\ell,s,t)$ for all quadruples $(k,\ell,s,t)$ with $k >  2t \geq 1$, $\ell > 2s \geq 1$ and $n > n_0(k,\ell,s,t)$.

Let us first describe the construction.

\noindent \textbf{Example 1.6}
	Let $k,\ell,s,t$ be integers, $n \geq k\ell$, $k \geq t \geq 1$, $\ell \geq s \geq 1$. Let $F_1, F_2, \dots, F_{s-1}$ and $G_1, \dots, G_{t-1}$ be pairwise disjoint $k$-sets and $\ell$-sets in $[2,n]$, respectively. Assume that $\mathcal{F}_0 := \{F_1, \dots, F_{s-1}\}$ and $\mathcal{G}_0 := \{G_1, \dots, G_{t-1}\}$ are CI. Define
	\[
	\mathcal{P} = \{1\} \times G_1 \times \dots \times G_{t-1}, \quad \mathcal{R} = \{1\} \times F_1 \times \dots \times F_{s-1}.
	\]
	Note that $\mathcal{P}$ is an $t$-graph, $\mathcal{R}$ is a $s$-graph. Define
	\[
	\widetilde{\mathcal{F}} = \mathcal{F}_0 \cup \{F \in \binom{[n]}{k} : \exists P \in \mathcal{P}, P \subset F\},
	\]
	\[
	\widetilde{\mathcal{G}} = \mathcal{G}_0 \cup \{G \in \binom{[n]}{\ell} : \exists R \in \mathcal{R}, R \subset G\}.
	\]
	It is easy to check that $\mathcal{F}, \mathcal{G}$ are CI, $\tau(\mathcal{F})=s, \tau(\mathcal{G})=t$. For $k > t$,
	\[
	|\widetilde{\mathcal{F}}| = \ell^{t-1} \binom{n-t}{k-t} + O(n^{k-t-1}),
	\]
	for $\ell > s$,
	\[
	|\widetilde{\mathcal{G}}| = k^{s-1} \binom{n-s}{\ell-s} + O(n^{\ell-s-1}).
	\]
	
	We should point out that both $\widetilde{\mathcal{F}}$ and $\widetilde{\mathcal{G}}$ are determined by the construction up to isomorphism. However for the intersection pattern of the pair $(\mathcal{F}_0, \mathcal{G}_0)$ there are a growing number of possibilities as $k,l \to \infty$. That is, for the pair $(\mathcal{F}, \mathcal{G})$ there are many non-isomorphic choices.
	
	Now we can state our main result.

\noindent \textbf{Theorem 1.7} 
	Let $k,\ell,s,t$ be integers, $k > 2t \geq 1$, $\ell > 2s \geq 1$. Suppose that $\mathcal{F} \subset \binom{[n]}{k}, \mathcal{G} \subset \binom{[n]}{\ell}$ are CI with $\tau(\mathcal{F})=s, \tau(\mathcal{G})=t$. Then for $n > n_0(k,\ell)$,
	\begin{equation}
		 \quad |\mathcal{F}| |\mathcal{G}| \leq |\widetilde{\mathcal{F}}| |\widetilde{\mathcal{G}}|. \tag{1.9}
	\end{equation}
	Moreover, in case of equality $\mathcal{F}$ is isomorphic to $\widetilde{\mathcal{F}}$ and $\mathcal{G}$ is isomorphic to $\widetilde{\mathcal{G}}$.
	
\section{Bounding $\mathcal{F}$ and $\mathcal{G}$ via a branching process}

Let us start with some important observations.

Since we are trying to prove upper bounds on $|F||G|$, we may assume that the CI pair $\mathcal{F} \subset \binom{[n]}{k}$ and $\mathcal{G} \subset \binom{[n]}{\ell}$ is maximal, that is, for $F \in \binom{[n]}{k} \setminus \mathcal{F}$ and $G \in \binom{[n]}{\ell} \setminus \mathcal{G}$ neither $(\mathcal{F} \cup \{F\}, \mathcal{G})$ nor $(\mathcal{F}, \mathcal{G} \cup \{G\})$ is CI. Let us note the trivial fact $\tau(\mathcal{H}) \leq \tau(\mathcal{H}')$ whenever $\mathcal{H} \subset \mathcal{H}'$, i.e., making the CI pair $(\mathcal{F}, \mathcal{G})$ maximal will not lower the covering numbers.

Define $\mathcal{F}^* = \{H \subset [n] : |H| \leq k, H \text{ is a transversal of } \mathcal{G}\}$ and define $\mathcal{G}^*$ analogously.

\begin{lemma}
	\begin{enumerate}
		\item[(i)] If $B \in \mathcal{F}^*$ and $B \subset F \in \binom{[n]}{k}$ then $F \in \mathcal{F}$,
		\item[(ii)] If $C \in \mathcal{G}^*$ and $C \subset G \in \binom{[n]}{\ell}$ then $G \in \mathcal{G}$,
		\item[(iii)] $\mathcal{F}^*$ and $\mathcal{G}^*$ are CI for $n \geq k+\ell$.
	\end{enumerate}
\end{lemma}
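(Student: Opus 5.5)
Parts (i) and (ii) come straight from the maximality assumption on the CI pair $(\mathcal{F},\mathcal{G})$, and part (iii) follows from them by padding sets up to full size, which is where $n\geq k+\ell$ is used.

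For (i), let $B\in\mathcal{F}^*$ and $B\subset F\in\binom{[n]}{k}$. Since $B$ is a transversal of $\mathcal{G}$, every $G\in\mathcal{G}$ meets $B$ and hence meets $F$. So $\{F\}$ and $\mathcal{G}$ are CI, and therefore $(\mathcal{F}\cup\{F\},\mathcal{G})$ is CI. If $F\notin\mathcal{F}$, this contradicts maximality. Hence $F\in\mathcal{F}$. Part (ii) is the same argument with the roles of $\mathcal{F}$ and $\mathcal{G}$ (and of $k$ and $\ell$) exchanged.

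For (iii), suppose instead that some $B\in\mathcal{F}^*$ and $C\in\mathcal{G}^*$ are disjoint. We have $|B|\leq k$ and $|C|\leq \ell$, so $|B\cup C|\leq k+\ell\leq n$ and $|[n]\setminus(B\cup C)|\geq (k-|B|)+(\ell-|C|)$. This lets us choose disjoint sets $X,Y\subset[n]\setminus(B\cup C)$ with $|X|=k-|B|$ and $|Y|=\ell-|C|$. Put $F=B\cup X$ and $G=C\cup Y$. Then $F\in\binom{[n]}{k}$ contains $B$, so $F\in\mathcal{F}$ by (i). Likewise $G\in\binom{[n]}{\ell}$ contains $C$, so $G\in\mathcal{G}$ by (ii). By construction $F\cap G=\emptyset$, contradicting that $\mathcal{F}$ and $\mathcal{G}$ are CI. Hence $\mathcal{F}^*$ and $\mathcal{G}^*$ are CI.

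The only step needing care is the counting in (iii): padding $B$ and $C$ disjointly requires exactly $n\geq k+\ell$. The rest is immediate from maximality. One degenerate point: $\emptyset$ could lie in $\mathcal{F}^*$ only if $\mathcal{G}=\emptyset$. That case is excluded because $\tau(\mathcal{G})=t\geq1$ forces $\mathcal{G}\neq\emptyset$, and in any case the argument above already handles it.
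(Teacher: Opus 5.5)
Your proof is correct and follows the paper's argument essentially verbatim: (i) and (ii) come from maximality of the CI pair, and (iii) is proved by contradiction, padding disjoint $B\in\mathcal{F}^*$ and $C\in\mathcal{G}^*$ to disjoint sets $F\in\binom{[n]}{k}$ and $G\in\binom{[n]}{\ell}$ using $n\geq k+\ell$. Your explicit count $n-|B|-|C|\geq (k-|B|)+(\ell-|C|)$ spells out the padding step that the paper leaves implicit.
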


\begin{proof}
	To prove (i) just note that $B \in \mathcal{F}^*$ implies that $\mathcal{F} \cup \{F\}$ and $\mathcal{G}$ are CI. As the pair $(\mathcal{F}, \mathcal{G})$ is maximal, $F \in \mathcal{F}$.
	The same argument works for (ii).
	
	Let us prove (iii) indirectly. Suppose that ${F}^*\in \mathcal{F}^*$ and ${G}^*\in \mathcal{G}^*$ are disjoint. Using $n \geq k+\ell$,  fix $F \in \binom{[n]}{k}$ and $G \in \binom{[n]}{\ell}$ with $F \cap G = \emptyset$ and still $F^* \subset F$, $G^* \subset G$. By (i) and (ii), $F \in \mathcal{F}$, $G \in \mathcal{G}$. This contradicts the fact that $(\mathcal{F}, \mathcal{G})$ form a CI pair .
\end{proof} 

Let $\mathcal{B}(\mathcal{F}^*) \ (\mathcal{B}(\mathcal{G}^*))$ be the collection of containment-minimal members of $\mathcal{F}^* \ (\mathcal{G}^*)$, respectively. In view of Lemma 2.1, 
\[ \mathcal{F} = \{F \in \binom{[n]}{k} : \exists B \in \mathcal{B}(\mathcal{F}^*) : B \subset F \} \text{ and similarly for } \mathcal{G}.\]
Set $\mathcal{B}(i) = \mathcal{B}(\mathcal{F}^*) \cap \binom{[n]}{i}$ and $\mathcal{B}'(i) = \mathcal{B}(\mathcal{G}^*) \cap \binom{[n]}{i}$. Then $\mathcal{B}(i) = \emptyset$ for $i < t$ whence

\begin{equation}
	|\mathcal{F}| \leq \sum_{t \leq i \leq k} |\mathcal{B}(i)| \binom{n-i}{k-i}.
\end{equation}

Let us now describe a branching process to find the members of $\mathcal{B}(\mathcal{F}^*)$. First fix a set $P_1 \in \mathcal{B}'(s)$ and define $s$ sequences $(x_i), x_i \in P_1$ of length $1$.

Once we have defined the sequences of length $q$ check for each sequence $(x_1, \dots, x_q)$ whether $\{x_1, \dots, x_q\}$ is a transversal of $\mathcal{G}$. If ``yes'' then stop. If ``no'' then choose $P_{q+1} \in \mathcal{B}(\mathcal{G}^*)$ of minimal size with $\{x_1, \dots, x_q\} \cap P_{q+1} = \emptyset$ and define the $|P_{q+1}|$ extended sequences $(x_1, \dots, x_q, x_{q+1}) : x_{q+1} \in P_{q+1}$. If $q=k$ then stop also. Note that $(x_1, \dots, x_q)$ might fail to produce a transversal of $\mathcal{F}$ but it will not affect our upper bounds.

\begin{claim}
	\begin{equation}
		 |\mathcal{B}(j)| \leq s \ell^{j-1} \quad \text{for } t \leq j \leq k. 
	\end{equation}
	
\end{claim}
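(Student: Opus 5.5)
We prove the claim by the standard branching-tree argument. View the branching process as a rooted tree $\mathbb{T}$. The root has $|P_1|=s$ children, labelled by the elements of $P_1$. A node at depth $q$ is labelled by the sequence $(x_1,\dots,x_q)$. If $\{x_1,\dots,x_q\}$ is not yet a transversal of $\mathcal{G}$ and $q<k$, the node has exactly $|P_{q+1}|$ children, where $P_{q+1}\in\mathcal{B}(\mathcal{G}^*)$ and so $|P_{q+1}|\le \ell$. The bound then follows from two facts.

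\emph{First}, the number of nodes at depth $j$ is at most $s\ell^{j-1}$. This is immediate by induction on $j$: depth $1$ has exactly $s$ nodes, and each node at depth $q$ has at most $\ell$ children. We use $|P_{q+1}|\le \ell$, which holds because every member of $\mathcal{G}^*$ contained in a member of $\mathcal{G}$ has size at most $\ell$. More directly, any $G\in\mathcal{G}$ is itself a transversal of $\mathcal{G}$, since $\mathcal{G}\subset\mathcal{G}^*$: $\mathcal{F}$ and $\mathcal{G}$ are CI, so $\mathcal{G}$ consists of transversals of $\mathcal{F}$. Hence the minimal-size choice satisfies $|P_{q+1}|\le\ell$.

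\emph{Second}, every $B\in\mathcal{B}(j)$ appears as the underlying set $\{x_1,\dots,x_j\}$ of some depth-$j$ node. Since distinct members of $\mathcal{B}(j)$ are distinct sets, this gives an injection into depth-$j$ nodes, and hence $|\mathcal{B}(j)|\le s\ell^{j-1}$.

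To prove the second fact, fix $B\in\mathcal{B}(j)$. Then $B$ is a minimal transversal of $\mathcal{G}$ with $|B|=j\le k$. We walk down the tree, maintaining the invariant that the current node $(x_1,\dots,x_q)$ satisfies $\{x_1,\dots,x_q\}\subset B$ with the $x_i$ distinct.

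At the start, $P_1\in\mathcal{B}'(s)\subset\mathcal{G}^*$, and $\mathcal{F}^*$ and $\mathcal{G}^*$ are CI by Lemma 2.1(iii), using $n\ge k+\ell$. So $B\cap P_1\neq\emptyset$, and we choose $x_1\in B\cap P_1$.

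At a node with $\{x_1,\dots,x_q\}\subsetneq B$, minimality of $B$ shows that $\{x_1,\dots,x_q\}$ is not a transversal of $\mathcal{G}$. Also $q<j\le k$, so the process does not stop there, and it picks $P_{q+1}\in\mathcal{B}(\mathcal{G}^*)$ disjoint from $\{x_1,\dots,x_q\}$. By Lemma 2.1(iii) again, $B\cap P_{q+1}\neq\emptyset$. Any element $x_{q+1}\in B\cap P_{q+1}$ is new, since $P_{q+1}$ avoids the earlier $x_i$, and the corresponding child exists in the tree. So the invariant is preserved and the set grows by one element.

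After exactly $j$ steps the set equals $B$, and we have reached a depth-$j$ node whose underlying set is $B$. We should check that the process did not stop earlier at some $q<j$. A stop at $q<j$ would require either $\{x_1,\dots,x_q\}$ to be a transversal, which contradicts the minimality of $B$, or $q=k$, which is impossible since $q<j\le k$.

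The only delicate point is this use of cross-intersection, Lemma 2.1(iii), to guarantee $B\cap P_{q+1}\ne\emptyset$. It requires $B\in\mathcal{F}^*$ and $P_{q+1}\in\mathcal{G}^*$, and both hold by definition. The rest of the argument is counting.
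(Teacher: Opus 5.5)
Your proof is correct and is essentially the paper's argument: you bound the number of depth-$j$ sequences by $s\ell^{j-1}$ and show that every $B\in\mathcal{B}(j)$ is reached, using Lemma 2.1(iii) to get $B\cap P_{q+1}\neq\emptyset$ at each step. You run this as a forward walk down the tree, whereas the paper takes a maximal sequence inside $B$ and derives a contradiction, which is the same idea. One small slip: $|P_{q+1}|\le\ell$ holds directly because $\mathcal{G}^*$ by definition contains only sets of size at most $\ell$, and where you say $G\in\mathcal{G}$ is a transversal of $\mathcal{G}$ you mean a transversal of $\mathcal{F}$.
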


\begin{proof}
	From the construction of the branching process it is clear that we constructed at most $s \ell^{j-1}$ transversals $\{x_1, \dots, x_j\}$ of size $j$. Thus once we prove that each $B \in \mathcal{B}(j)$ occurs at least once, the proof of (2.2) is done.
	
	Assume the contrary and fix $B \in \mathcal{B}(j)$ for some $t \leq j \leq k$ that does not occur. Let $\{x_1, \dots, x_j\}$ be a maximal subset of $B$ such that the sequence $(x_1, \dots, x_j)$ occurs in the branching process. As a proper subset of $B \in \mathcal{B}(j)$ it cannot be a transversal of $\mathcal{G}$. Hence we extended $(x_1, \dots, x_j)$ using some $P_{j+1} \in \mathcal{B}(\mathcal{G}^*)$ disjoint to $\{x_1, \dots, x_j\}$ (note that for $j=0$, $|P_{j+1}|=s$). As $P_{j+1} \cap B 
	\neq \emptyset$ (cf.\ Lemma~2.1(iii)) we may fix $x_{j+1} \in P_{j+1} \cap B$.
	
	Then $\{x_1, \dots, x_j, x_{j+1}\} \subset B$, contradicting the maximal choice of $j$. This proves that $B$ occurs in the branching process. Thus (2.2) holds .
\end{proof} 

Let us stress that $t = \min\{i : B(i) 
\neq \emptyset\}$ and 
$s = \min\{j : B'(j) 
\neq \emptyset\}$. We will use the formulas (2.3) :
\begin{align*}
	|\mathcal{F}| &\leq |B(t)|\binom{n-t}{k-t} + \sum_{t < i \leq k} |B(i)|\binom{n-i}{k-i} = |B(t)|\binom{n-t}{k-t} + O(n^{k-t-1}), \\   
	|\mathcal{G}| &\leq |B'(s)|\binom{n-s}{\ell-s} + \sum_{s < j \leq \ell} |B'(j)|\binom{n-j}{\ell-j} = |B'(s)|\binom{n-s}{\ell-s} + O(n^{\ell-s-1}). 
\end{align*}
Hence the asymptotic size of $|\mathcal{F}||\mathcal{G}|$ depends only on 
$|\mathcal{B}(t)||\mathcal{B}'(s)|$.

\medskip

 Set $\mathcal{P} = \mathcal{B}(t)$, $u = \tau(\mathcal{P})$, $\mathcal{R} = \mathcal{B}'(s)$, $v = \tau(\mathcal{R})$.
 
\begin{lemma} 
 	(i) $|\mathcal{P}| \leq u s^{v-1} \ell^{t-v}$, (ii) $|\mathcal{R}| \leq v t^{u-1} k^{s-u}$.
 
\end{lemma}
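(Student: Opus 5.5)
The plan is to rerun the branching process of Claim~2.2, this time fitted to the set family $\mathcal{P}=\mathcal{B}(t)$. The first branching step will use a minimum transversal of $\mathcal{P}$, the next $v-1$ steps will use members of $\mathcal{R}$, and only the remaining steps will use general members of $\mathcal{G}$. Each step costs at most $u$, $s$ and $\ell$ choices respectively, which gives the product $u s^{v-1}\ell^{t-v}$. Part (ii) follows by the symmetric argument, exchanging the roles of $(\mathcal{F},k,s,\mathcal{P},u)$ and $(\mathcal{G},\ell,t,\mathcal{R},v)$.

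First I record the cross-intersection facts needed.
\begin{itemize}
\item Every $P\in\mathcal{P}$ is a transversal of $\mathcal{G}$, since $\mathcal{P}\subset\mathcal{F}^*$.
\item Every $P\in\mathcal{P}$ meets every $R\in\mathcal{R}$, by Lemma~2.1(iii), as $\mathcal{R}\subset\mathcal{G}^*$.
\item $\mathcal{G}\subset\mathcal{G}^*$, because each $G\in\mathcal{G}$ meets every member of $\mathcal{F}$.
\item $v\le t$: any single $P\in\mathcal{P}$, which exists by the definition of $t$, is a $t$-set covering $\mathcal{R}$. This keeps the exponent $t-v$ nonnegative.
\end{itemize}

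The process runs as follows. Fix a transversal $T$ of $\mathcal{P}$ with $|T|=u$, and start with the $u$ sequences $(x_1)$, $x_1\in T$. Suppose a sequence $(x_1,\dots,x_q)$ has been built.
\begin{itemize}
\item If $q<v$, then $\{x_1,\dots,x_q\}$ is not a transversal of $\mathcal{R}$, since $\tau(\mathcal{R})=v$. Choose some $R_{q+1}\in\mathcal{R}$ disjoint from it and extend by $x_{q+1}\in R_{q+1}$, giving $s$ choices.
\item If $v\le q<t$ and $\{x_1,\dots,x_q\}$ is not a transversal of $\mathcal{G}$, choose $G_{q+1}\in\mathcal{G}$ disjoint from it and extend by $x_{q+1}\in G_{q+1}$, giving $\ell$ choices.
\item Stop at length $t$.
\end{itemize}
The number of sequences of length $t$ is then at most $u\cdot s^{v-1}\cdot \ell^{t-v}$.

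It remains to show that every $P\in\mathcal{P}$ appears as the underlying set of some length-$t$ sequence, by the maximality argument of Claim~2.2. Take a longest sequence $(x_1,\dots,x_q)$ produced by the process with $\{x_1,\dots,x_q\}\subset P$. Such a sequence exists with $q\ge1$ because $T\cap P\neq\emptyset$.

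Suppose $q<t$. Then $\{x_1,\dots,x_q\}$ is a proper subset of the minimal transversal $P$, so it is not a transversal of $\mathcal{G}$ and the process did extend it.
\begin{itemize}
\item If $q<v$, the extension used some $R_{q+1}\in\mathcal{R}$, which meets $P$.
\item If $q\ge v$, it used some $G_{q+1}\in\mathcal{G}$, which meets $P$ because $P$ is a transversal of $\mathcal{G}$.
\end{itemize}
Either way we can pick $x_{q+1}\in P$ of the extending set, contradicting maximality. Hence $q=t$ and $\{x_1,\dots,x_t\}=P$. Repeated counting only helps, so (i) follows.

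The only delicate point is guaranteeing that the disjoint set needed at each step exists. For steps $q<v$ this is exactly $\tau(\mathcal{R})=v$; for later steps it is the minimality of $P$. I must also make sure the stopping rule never cuts off a sequence that is still a proper subset of some $P$. The rule above avoids this because it extends every non-transversal sequence of length less than $t$.
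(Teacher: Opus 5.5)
Your proposal is correct and matches the paper's proof essentially step for step. You seed the branching with a minimum transversal of $\mathcal{P}$, then take $v-1$ steps through members of $\mathcal{R}$ and $t-v$ steps through members of $\mathcal{G}$, and show each $P\in\mathcal{P}$ occurs by the maximal-prefix argument of Claim~2.2. You also make explicit two points the paper leaves implicit or garbled: that $v\le t$, and that in the later steps $x_{q+1}$ is chosen from $G_{q+1}$ rather than from $R_i$.
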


 \begin{proof}
 	 By symmetry let us only prove (i).
 
 First fix a $u$-element set $A$ which is transversal to $\mathcal{P}$. Then construct a sequence $(x_i)$ of length $1$ for each $x_1 \in A$.
 
 If $1 \leq i < v$ and $(x_1, \dots, x_i)$ is among the sequences of length $i$ then fix $R_i \in \mathcal{R}$ with $\{x_1, \dots, x_i\} \cap R_i = \emptyset$. It is possible by $\tau(R)=v$. Then extend the sequence to $(x_1, \dots, x_i, x_{i+1})$ in all possible $s$ ways where $x_{i+1} \in R_i$.
 
 For $v \leq i < t$ and to every sequence $(x_1, \dots, x_i)$ fix $G_i \in \mathcal{G}$ with $\{x_1, \dots, x_i\} \cap G_i = \emptyset$. It is possible by $\tau(\mathcal{G})=t$. Then extend the sequence to $(x_1, \dots, x_i, x_{i+1})$ in all possible $l$ ways where $x_{i+1} \in R_i$.
 
 Eventually we construct $u s^{v-1} \ell^{t-v}$ sequences. Some are not transversals of $\mathcal{G}$, different sequences might correspond to the same $t$-set but these do not affect the upper bound (i) as long as each $P \in P$ occurs at least once. This fact can be proved in the very same way as for Claim 2.2 .
 \end{proof}

\section{Proof of Theorem 1.7}

    First let us recall the following classical result of Gyárfás [8] :

\begin{theorem}\label{thm:gyarfas}
	For any $k$-graph $\mathcal{F}$ with $\tau(\mathcal{F}) = t$,
	\begin{equation}
		|\mathcal{T}(\mathcal{F})| \leq k^t
	\end{equation}
	with equality if and only if $\mathcal{F}$ consists of $t$ pairwise disjoint edges.
\end{theorem}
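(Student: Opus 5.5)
We read $\mathcal{T}(\mathcal{F})$ as the family of $t$-element transversals of $\mathcal{F}$, i.e.\ the minimum transversals. The plan is to run the same branching process as in Claim 2.2 and Lemma 2.3 for the inequality, and to induct on $t$ for the equality case.

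\emph{Inequality.} Fix an edge $E_1\in\mathcal{F}$ and start with the $k$ sequences $(x_1)$, $x_1\in E_1$. Given a sequence $(x_1,\dots,x_q)$ with $q<t$, the set $\{x_1,\dots,x_q\}$ is not a transversal because $\tau(\mathcal{F})=t$. So we may pick $E_{q+1}\in\mathcal{F}$ disjoint from it and branch in the $k$ ways $x_{q+1}\in E_{q+1}$. After $t$ steps there are $k^t$ sequences. Every $T\in\mathcal{T}(\mathcal{F})$ occurs as the set of some sequence, by the argument of Claim 2.2. Indeed, take a longest prefix $(x_1,\dots,x_q)$ inside $T$. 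If $q<t$, the edge $E_{q+1}$ used to extend it meets $T$ (since $T$ is a transversal) but misses $\{x_1,\dots,x_q\}$, so the prefix can be extended inside $T$, contradicting maximality. Hence $|\mathcal{T}(\mathcal{F})|\le k^t$.

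\emph{Equality, the easy direction.} Suppose $\mathcal{F}=\{E_1,\dots,E_t\}$ with the $E_i$ pairwise disjoint. The $t$-transversals are exactly the sets $\{y_1,\dots,y_t\}$ with $y_i\in E_i$, which gives $k^t$ of them.

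\emph{Equality, the converse, by induction on $t$.} For $t=1$: if $k$ vertices lie in every edge, then every edge equals that $k$-set, so $|\mathcal{F}|=1$. For $t\ge2$, set $\mathcal{F}_x=\{F\in\mathcal{F}: x\notin F\}$ for $x\in E_1$. If $T\in\mathcal{T}(\mathcal{F})$ contains $x$, then $T\setminus\{x\}$ is a transversal of $\mathcal{F}_x$, and $\tau(\mathcal{F}_x)\ge t-1$ (otherwise $\tau(\mathcal{F})<t$). Hence $\tau(\mathcal{F}_x)=t-1$ and $T\setminus\{x\}\in\mathcal{T}(\mathcal{F}_x)$. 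This gives
$$|\mathcal{T}(\mathcal{F})|\le\sum_{x\in E_1}\#\{T\in\mathcal{T}(\mathcal{F}):x\in T\}\le\sum_{x\in E_1}|\mathcal{T}(\mathcal{F}_x)|\le k\cdot k^{t-1}.$$
So equality forces three things:
\begin{enumerate}
\item[(a)] every $T\in\mathcal{T}(\mathcal{F})$ meets $E_1$ in exactly one point;
\item[(b)] every $(t-1)$-transversal of $\mathcal{F}_x$, extended by $x$, lies in $\mathcal{T}(\mathcal{F})$;
\item[(c)] $|\mathcal{T}(\mathcal{F}_x)|=k^{t-1}$.
\end{enumerate}
By the induction hypothesis and (c), each $\mathcal{F}_x$ consists of $t-1$ pairwise disjoint edges $D^x_1,\dots,D^x_{t-1}$.

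The remaining step, which we expect to be the main (though elementary) obstacle, is to show that $E_1$ is disjoint from $D^x_1\cup\dots\cup D^x_{t-1}$ and that $\mathcal{F}=\{E_1\}\cup\mathcal{F}_x$. For disjointness, suppose $y\in E_1\cap D^x_j$. Choose $T'$ containing $y$ and one point from each $D^x_i$ with $i\ne j$. Then $T=T'\cup\{x\}$ is a $t$-set. It covers $\mathcal{F}_x$, and it covers every edge through $x$, so $T\in\mathcal{T}(\mathcal{F})$. But $T\cap E_1\supseteq\{x,y\}$, contradicting (a). So $E_1\cap D^x_i=\emptyset$ for all $i$.

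Next, let $F\in\mathcal{F}$ with $x\in F$ and $F\ne E_1$. Since $|F|=|E_1|=k$, there is a point $z\in E_1\setminus F$, and $z\ne x$. By what we just proved, $\mathcal{F}_z$ consists of $t-1$ pairwise disjoint edges all disjoint from $E_1$, hence none of them contains $x$. So $\mathcal{F}_z\subseteq\mathcal{F}_x$, and equality holds since both families have exactly $t-1$ members. But $F\in\mathcal{F}_z\setminus\mathcal{F}_x$ because $z\notin F$ and $x\in F$, a contradiction. (This step needs $k\ge2$ so that a second point of $E_1$ exists; for $k=1$ the statement is trivial.)

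Therefore $\mathcal{F}=\{E_1,D^x_1,\dots,D^x_{t-1}\}$, which is $t$ pairwise disjoint edges, completing the induction.
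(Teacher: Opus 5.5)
The paper does not prove this statement; it quotes it as a classical result of Gyárfás and uses it only as a black box in the uniqueness part of Theorem 1.7. Your proof of the inequality is the branching process the paper runs in Claim 2.2 and Lemma 2.3, here branching on edges of $\mathcal{F}$ itself. It is correct: each new point comes from an edge avoiding the earlier ones, so a branch of length $t$ lying inside $T$ spells out all of $T$. Your induction for the equality case is also correct and self-contained, and it supplies what the paper delegates to the citation. Both key moves check out: using (a) against the explicit transversal $T'\cup\{x\}$ to show $E_1$ misses every $D^x_j$, and comparing $\mathcal{F}_z$ with $\mathcal{F}_x$ for $z\in E_1\setminus F$ to exclude a second edge through $x$.

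One step needs a small repair. In the displayed chain you bound $|\mathcal{T}(\mathcal{F}_x)|\le k^{t-1}$ for every $x\in E_1$. But you only established $\tau(\mathcal{F}_x)=t-1$ for those $x$ lying in some minimum transversal; for other $x$ one can have $\tau(\mathcal{F}_x)=t$ and more than $k^{t-1}$ minimum transversals. For example, take $k=t=2$, $\mathcal{F}=\{\{1,2\},\{1,3\},\{2,3\},\{1,4\}\}$ and $E_1=\{1,4\}$. Then $\mathcal{F}_4$ is a triangle with three minimum transversals, and $\sum_{x\in E_1}|\mathcal{T}(\mathcal{F}_x)|=2+3=5>k^t$, so the chain as written is false in general.

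The fix is to bound $N_x=\#\{T\in\mathcal{T}(\mathcal{F}):x\in T\}$ by $k^{t-1}$ directly. This is trivial if $N_x=0$, and follows from the injection $T\mapsto T\setminus\{x\}$ otherwise. Equality then forces $N_x=k^{t-1}>0$ for every $x\in E_1$, which gives $\tau(\mathcal{F}_x)=t-1$ and (a)--(c) for all $x\in E_1$. You do need this for every $x$, since you later apply (c) to the point $z$. Finally, your caveat about $k\ge 2$ is unnecessary: $z\in E_1\setminus F$ exists whenever $F\neq E_1$.
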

Let $k,l,s,t$ be integers, $k > 2t \geq 1$, $\ell > 2s \geq 1$.
 
 \begin{claim}
  
  $u t^{u-1} k^{s-u} \leq k^{s-1}$ and $v s^{v-1} \ell^{t-v} \leq \ell^{t-1}$ with equality iff $u=1$ and $v=1$ , respectively.

 \end{claim}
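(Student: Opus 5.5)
We prove the first inequality; the second follows by exchanging $(k,s,u)$ with $(\ell,t,v)$. Recall $u=\tau(\mathcal{P})$ with $\mathcal{P}=\mathcal{B}(t)\subset\binom{[n]}{t}$. The range we need is $1\le u\le s$: a single member of $\mathcal{R}=\mathcal{B}'(s)$ is an $s$-set meeting every member of $\mathcal{P}$ by Lemma 2.1(iii), so $u\le s$. Only this bound and $k>2t$ will be used.

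We write $f(u)=u\,t^{u-1}k^{s-u}$ and compare consecutive values. For $1\le u<s$,
\[
\frac{f(u+1)}{f(u)}=\frac{(u+1)\,t}{u\,k}\le \frac{2t}{k}<1,
\]
since $(u+1)/u\le 2$ and $k>2t$. So $f$ is strictly decreasing on $\{1,\dots,s\}$ and attains its maximum only at $u=1$. There $f(1)=k^{s-1}$. This gives $u\,t^{u-1}k^{s-u}\le k^{s-1}$, with equality if and only if $u=1$.

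The only step needing care is the ratio estimate: $(u+1)/u\le 2$ uses $u\ge1$, and combining it with $k>2t$ makes the ratio strictly below $1$, which is what forces strict inequality for every $u\ge2$. One degenerate case: if $t=0$, the factor $t^{u-1}$ vanishes for $u\ge2$, so the inequality is strict and the claim still holds. There is no real obstacle beyond keeping $u\le s$ in hand.
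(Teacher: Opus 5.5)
Your proof is correct and is essentially the paper's argument: the paper reduces the claim to $u\le (k/t)^{u-1}$ and uses $(k/t)^{u-1}\ge 2^{u-1}\ge u$, strict for $u\ge 2$ because $k>2t$, and your consecutive ratios telescope to the same quantity, since $\prod_{j=1}^{u-1}\frac{(j+1)t}{jk}=u\,(t/k)^{u-1}$ with each factor below $1$. Your bound $u\le s$ and the $t=0$ remark are unnecessary but harmless: the ratio argument works for every $u\ge 1$, and the hypothesis $2t\ge 1$ already forces $t\ge 1$.
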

 
 \begin{proof}  Since $(\frac{k}{t})^{u-1}\geq 2^{u-1} \geq u$, $u t^{u-1} k^{s-u} \leq k^{s-1}$ follows and equality holds iff $u=1$ . Similarly for $(v,s,\ell)$.
\end{proof}

   Now we are able to prove Theorem 1.7 :
 \begin{proof}
 	We proved $|\mathcal{P}||\mathcal{R}|\leq k^{s-1}\ell^{t-1}$.If the inequality is strict then $|\mathcal{F}||\mathcal{G}|<|\widetilde{\mathcal{F}}||\widetilde{\mathcal{G}}|$ follows for $n>n_0(k,\ell)$ using (2.3). Hence we may suppose $|\mathcal{P}||\mathcal{R}|=k^{s-1}\ell^{t-1}$.
 	By Lemma 2.3 and Claim 3.2 , $u=v=1$ and $|\mathcal{P}| =\ell^{t-1},|\mathcal{R}| =k^{s-1}$. Let $x\in \bigcap{\mathcal{P}}$,  then $\tau(\mathcal{G}(\overline{x}))=t-1$ and $\mathcal{P}=\{x\}\times \mathcal{T}(\mathcal{G}(\overline{x}))$. As $|\mathcal{T}(\mathcal{G}(\overline{x}))|=|\mathcal{P}|=\ell^{t-1}$, the uniqueness in Theorem 3.1 implies $\mathcal{G}(\overline{x})$ consists of $t-1$ pairwise disjoint edges. Similarly let $y\in \bigcap{\mathcal{R}}$,  then $\mathcal{R}=\{y\}\times \mathcal{T}(\mathcal{F}(\overline{y}))$ and $\mathcal{F}(\overline{y})$ consists of $s-1$ pairwise disjoint edges. Since $\mathcal{P} , \mathcal{R}$ are CI, $x=y$ follows. Thus $\mathcal{F}$ is isomorphic to $\widetilde{\mathcal{F}}$, $\mathcal{G}$ is isomorphic to $\widetilde{\mathcal{G}}$, completing the proof.
 \end{proof}
    With some extra effort we succeeded in extending the range of Theorem 1.7 to $k=2t$ and $\ell=2s$. However to prove the statements for all $k>t, \ell >s$ seems to need some new ideas.

\end{document}